\newtheorem{theorem}{Theorem}[section]
\newtheorem{proposition}[theorem]{Proposition}
\newtheorem{corollary}[theorem]{Corollary}
\theoremstyle{definition}
\newtheorem{definition}[theorem]{Definition}
\theoremstyle{remark}
\newtheorem{remark}[theorem]{Remark}
\numberwithin{equation}{section}
\begin{document}
\title{\bf 2-Normed $\mathbb D$-Modules and Hahn-Banach Theorem for $\mathbb D$-linear 2-functionals}
\date{\textbf{Kulbir Singh and Romesh Kumar}}
\vspace{0in}
\maketitle
$\textbf{Abstract.}$ In this paper we introduce the notion of $\mathbb D$-valued 2-norm on  hyperbolic or $\mathbb D$-valued modules. Further, we define $\mathbb D$-linear 2-functional on these modules and consider some of their properties. We also establish the Hahn-Banach type extension theorem for $\mathbb D$-linear 2-functionals. \\\\
$\textbf{Keywords.}$ Hyperbolic modules, hyperbolic-valued norm, real-valued 2-norm, linear 2-functionals, Hahn-Banach theorems.\\\\
$\textbf{AMS  Subject Classification.}$ 46A70, 46A22.

\begin{section} {Introduction}

\renewcommand{\thefootnote}{\fnsymbol{footnote}}
\footnotetext{ The research of K. Singh is supported by CSIR-UGC (New-Delhi, India).}

The notion of linear 2-normed spaces was initially introduced by S. Gahler \cite{GHL}. Since then, many researchers have studied these spaces from different points of view and obtained various results, see for instance \cite{Cho, CD 1, CD 2, RE, LAL, AW}. In \cite{Zofia 1, Zofia 2}, Lewandowska gave a generalization of the Gahler's 2-normed space. The notion of 2-normed spaces is basically a two dimensional analogue of a normed space which got more attention after the publication of a paper \cite{AW}. In this paper, A. White defined and investigated the concept of bounded linear 2-functionals from $X\rightarrow X$, where $X$ denotes a 2-normed real linear space. Further, he proved a Hahn-Banach type extension theorem for linear 2-functionals on 2-normed real linear spaces. Later, S. N. Lal et. al in \cite{LAL} introduced 2-normed complex linear spaces and established a Hahn-Banach extension theorem for complex linear 2-functionals. For Hahn-Banach theorems for normed modules, one can refer to \cite{LL} and \cite{Hahn}.  \\\\
 In the present paper, we inroduce the notion of 2-normed $\mathbb D$-modules over the commutating non-division ring $\mathbb D$ of hyperbolic numbers and prove the Hahn-Banach theorem for $\mathbb D$-linear 2-functionals. Section 2, concentrates on some basic facts about hyperbolic numbers and $\mathbb D$-valued modules. In section 3, we define 2-normed $\mathbb D$-modules. Further, $\mathbb D$-linear 2-functionals on such modules and some of their properties are given in section 4. Finally, section 5 present Hahn-Banach theorems for 2-normed $\mathbb D$-modules.  
\end{section}
\begin{section} {A Review of Hyperbolic Numbers}
 In this section we summarize some basic properties of hyperbolic numbers which can be found in more details in \cite{YY, GG, KS2, Hahn, RR} and the references therein. The ring of hyperbolic numbers is the commutative ring $\mathbb D$ defined as 
$$ \mathbb D=\left\{ a+ \textbf{k} b \;\;|\;\; a, b \;\in \mathbb R,\; \textbf{k}^2=1 \;\;\text{with}\;\; \textbf{k} \notin \mathbb R\right\}.$$
Let $z=  a+ \textbf{k} b \in \mathbb D$. Then the $\dagger$-conjugation on $z$ is given by $$ z^\dagger=  a- \textbf{k} b.$$ 
This $\dagger$-conjugation on $\mathbb D$ is an additive, involutive and multiplicative in nature. A hyperbolic number $z= a+ \textbf{k} b$ is said to be an invertible if $zz^\dagger=a^2-b^2 \neq 0$. Thus, inverse of $z\in \mathbb D$ is given by $$ z^{-1}= \frac{z^\dagger}{zz^\dagger}\;.$$ If both $a$ and $b$ are non-zero but $a^2-b^2=0$, then $z$ is a zero-divisior in $\mathbb D$. We denote the set of all zero-divisiors in $\mathbb D$ by $\mathcal {NC}_\mathbb D$, that is, $$ \mathcal {NC}_\mathbb D= \left\{ z= a+ \textbf{k} b\;|\; z\neq 0,\; zz^\dagger= a^2-b^2=0\right\}.$$ 
The ring $\mathbb D$ of hyperbolic numbers is not a division ring as one can see that if $$e_1=\frac{1}{2} (1+\textbf{k})$$ and its $\dagger$-conjugate $$ e_1^\dagger =e_2=\frac{1}{2} (1-\textbf{k}),$$ then $e_1.e_2=0$, i.e., $e_1$ and $e_2$ are zero-divisiors in the ring $\mathbb D$. The numbers $e_1$ and $e_2$ are mutually complementary idempotent components. They make up the so called idempotent basis of hyperbolic numbers. Thus, every hyperbolic number $z= a+ \textbf{k} b$ in $\mathbb D$ can be written as :
\begin{align}  \label{hyp} z=e_1\alpha_1+ e_2 \alpha_2,\end{align}
where $\alpha_1= a+b$ and $\alpha_2= a-b$ are real numbers. Formula (\ref{hyp}) is called the idempotent representation of a hyperbolic number. Further, the two sets $e_1\mathbb D$ and $e_2\mathbb D$ are (principal) ideals in the ring $\mathbb D$ such that $e_1\mathbb D \cap e_2\mathbb D =\left\{0\right\}$ and $e_1\mathbb D + e_2\mathbb D= \mathbb D$. Hence, we can write \begin{align}\label{hyp 1} \mathbb D= e_1\mathbb D + e_2\mathbb D. \end{align} Formula (\ref{hyp 1}) is called the idempotent decomposition of $\mathbb D$. Thus the algebraic operations of addition, multiplication, taking of inverse, etc. can be realized component-wise. The set of non-negative hyperbolic numbers is given by (see \cite [P. 19] {YY}), 
$$ \mathbb D^+=\left\{ z= e_1\alpha_1+ e_2 \alpha_2\;|\; \alpha_1, \;\alpha_2 \geq 0\right\}.$$ Further, for any $z, u \in \mathbb D$, we write $ z\leq ' u$ whenever $u-z \in \mathbb D^+$ and it defines a partial order on $\mathbb D$. Also, if we take $z, u \in \mathbb R$, then $z\leq ' u$ if and only if $z\leq u$. Thus $\leq'$ is an extension of the total order $\leq$ on $\mathbb R$. 
If $A \subset \mathbb D$ is $\mathbb D$-bounded from above, then the $\mathbb D$-supremum of $A$ is defined as
$$\text{sup}_\mathbb D A = e_1\text{sup} A_1 + e_2 \text{sup}A_2 ,$$
where $A_1 = \left\{a_1 : e_1a_1 + e_2a_2 \in A\right\}$ and  $A_2 = \left\{a_2 : e_1a_1 + e_2a_2 \in A\right\}$.
Similarly, $\mathbb D$-infimum of a $\mathbb D$-bounded below set $A$ can be defined.\\
For any $z=e_1\alpha_1+e_2\alpha_2 \in \mathbb D$, the hyperbolic-valued modulus on $\mathbb D$ is given by   \begin{align} |z|_\textbf{k}= |e_1\alpha_1+e_2\alpha_2|_\textbf{k}= e_1|\alpha_1|+e_2|\alpha_2| \in \mathbb D^+, \end{align}  
where $|\alpha_1|$ and $|\alpha_2|$ denote the usual modulus of real numbers $\alpha_1$ and $\alpha_2$ respectively. For more details, see (\cite[Section 1.5] {YY}, \cite{Hahn} and \cite{RR}).\\
Let $X$ be a $\mathbb D$-module. Consider the sets
$$ X_1=e_1X \;\;\text{and}\;\; X_2=e_2X.$$
Then $$ X_1 \cap X_2= \left\{0\right\}\;\;\text{and}$$
 \begin{align} \label{PPP} X=e_1X_1+e_2X_2, \end{align}
 where $X_1$ and $X_2$ are real linear spaces as well as $\mathbb D$-modules. Formula (\ref{PPP}) is called the idempotent decomposition of $X$. Thus, any $x\in X$ can be uniquely written as $x=e_1x_1+e_2x_2$ with $x_1\in X_1$ and $x_2 \in X_2$.
Further, it can be shown that if $U$ and $W$ be any two real linear spaces, then $X=e_1 U+ e_2 W$ will be a $\mathbb D$-module. We denote the set of all zero-divisiors in $X$ by $\mathcal {NC}_X$, that is, $$ \mathcal {NC}_X= \left\{ 0 \neq z \in X\;:\; z\in e_1X\; \cup\;e_2X\right\}.$$  
\begin{definition} Let $X$ be a $\mathbb D$-module and $\|.\|_\mathbb D: X\rightarrow \mathbb D^+$ be a function such that for any $x, y\in X$ and $\alpha \in \mathbb D$, it satisfies the following properties:
 \begin{enumerate}
 \item[(i)] $\|x\|_\mathbb D =0 \Leftrightarrow x=0$.
 \item[(ii)] $\|\alpha x\|_\mathbb D= |\alpha|_\textbf{k} \|x\|_\mathbb D$.
 \item[(iii)] $\|x+y\|_\mathbb D \leq' \|x\|_\mathbb D + \|y\|_\mathbb D$.
 \end{enumerate}
 Then we say that $\|.\|_\mathbb D$ is a hyperbolic or $\mathbb D$-valued norm on $X$.\end{definition} 
 The hyperbolic-valued norm on hyperbolic modules has been intensively discussed in \cite{YY, Hahn} and many other references therein.
 \end{section}

\begin{section} {2-normed $\mathbb D$-modules}
In this section we define $\mathbb D$-valued 2-norm on hyperbolic or $\mathbb D$-valued modules. Further, we show that how this $\mathbb D$-valued 2-norm on a $\mathbb D$-module $X$ is related to the real 2-norm on the real idempotent components of $X$.
 
 \begin{definition} Let $X$ be a $\mathbb D$-module of dimension greater 1. A map $$\|.,.\|_\mathbb D:X \times X \rightarrow \mathbb D$$ is said to be $\mathbb D$-valued 2-norm on $X$ if for all $x,\;y,\;z \in X$ and $\alpha \in \mathbb D$ it satisfies the following properties:
 \begin{enumerate}
 \item[(i)]  $\|x,y\|_\mathbb D=0$ if and only if $x$, $y$ are linearly dependent,
 \item[(ii)] $\|x,y\|_\mathbb D=\|y,x\|_\mathbb D$,
 \item[(iii)] $\|\alpha x,y\|_\mathbb D =|\alpha|_\textbf{k}\|x,y\|_\mathbb D$,
 \item[(iv)] $\|x+y,z\|_\mathbb D \leq' \|x,z\|_\mathbb D+\|y,z\|_\mathbb D.$
 \end{enumerate}
 Then the pair $(X,\|.,.\|_\mathbb D)$ is called a 2-normed $\mathbb D$-module. Further, it can be shown that $\|x,y\|_\mathbb D \in \mathbb D^+$ and $\|x,y+\alpha x \|_\mathbb D= \|x,y\|_\mathbb D$ $\forall \;x,\;y \in X$ and $\forall \;\alpha \in \mathbb D$.  
 \end{definition} 
\begin{remark} Let $X_1$ and $X_2$ be two arbitrary real linear 2-normed spaces such that dim $(X_1)> 1$ and dim $(X_2)> 1$ with respective real 2-norms $\|.,.\|_1$ and  $\|.,.\|_2$. Let $X=e_1 X_1+e_2 X_2$. Then $X$ will form a $\mathbb D$-module with dim $(X)> 1$.\\\\ For $x=e_1x_1+e_2x_2$, $y=e_1y_1+e_2y_2 \in X$, we define
 \begin{align}\label{2-n} \|x,y\|_\mathbb D= \|e_1x_1+e_2x_2,e_1y_1+e_2y_2\|_\mathbb D=e_1\|x_1,y_1\|_1+e_2\|x_2,y_2\|_2.\end{align}
 Then the Formula (\ref{2-n}) is a $\mathbb D$-valued 2-norm on $X$ can be verified easily as follows:
  \begin{align*}\;\|x,y\|_\mathbb D=0&\Leftrightarrow e_1\|x_1,y_1\|_1+e_2\|x_2,y_2\|_2=0\\
 &\Leftrightarrow \|x_1,y_1\|_1=0 \;\text{and}\; \|x_2,y_2\|_2=0\\
 &\Leftrightarrow x_1, y_1 \text{ are linearly dependent and} \;x_2, y_2 \text{ are linearly dependent}\\
 &\Leftrightarrow  x \;\text{and}\; y \;\text{are linearly dependent}.
 \end{align*}
 Clearly,\;\; $\|x,y\|_\mathbb D=\|y,x\|_\mathbb D$.\\\\
 $\text{Further,\; for\; any}\; \alpha \in \mathbb D$,
\begin{align*}  \|\alpha x,y\|_\mathbb D&=\|(e_1\alpha_1+e_2\alpha_2)(e_1x_1+e_2x_2),e_1y_1+e_2y_2\|_\mathbb D\\
 &=\|e_1(\alpha_1 x_1)+e_2(\alpha_2 x_2),e_1y_1+e_2y_2\|_\mathbb D\\
 &=e_1\|\alpha_1 x_1,y_1\|_1+ e_2\|\alpha_2 x_2,y_2\|_2\\
 &=e_1|\alpha_1|\|x_1,y_1\|_1+e_2|\alpha_2|\|x_2,y_2\|_2\\
 &=(e_1 |\alpha_1|+e_2 |\alpha_2|)\;( e_1\|x_1,y_1\|_1+e_2\|x_2,y_2\|_2)\\
 &=|\alpha|_\textbf{k} \|x,y\|_\mathbb D.
 \end{align*}
 Finally, let $x,\;y,\;z \in X$. Then
\begin{align*} 		  \|x+y,z\|_\mathbb D&=\|(e_1x_1+e_2x_2)+(e_1y_1+e_2y_2), (e_1z_1+e_2z_2)\|_\mathbb D\\
 &=\|e_1(x_1+y_1)+e_2(x_2+y_2), e_1z_1+e_2z_2\|_\mathbb D\\
 &=e_1\|(x_1+y_1),z_1\|_1+e_2\|(x_2+y_2),z_2\|_2\\
 &\leq' e_1(\|x_1,z_1\|_1)+(\|y_1,z_1\|_1)+ e_2(\|x_2,z_2\|_2)+(\|y_2,z_2\|_2)\\
 &=(e_1 \|x_1,z_1\|_1+e_2 \|x_2,z_2\|_2)+ (e_1\|y_1,z_1\|_1 +e_2 \|y_2,z_2\|_2)\\
 &= \|x,z\|_\mathbb D+\|y,z\|_\mathbb D.
 \end{align*}  
 \end{remark} 
 \begin{proposition} \label{DD1} Let $X$ be a 2-normed $\mathbb D$-module. Then $e_1 X$ and $e_2 X$ can be seen as 2-normed real linear spaces with their norms induced by the $\mathbb D$-valued 2-norm on $X$.
 \end{proposition}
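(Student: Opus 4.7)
The strategy is to extract a real-valued function on $e_1 X \times e_1 X$ from $\|\cdot,\cdot\|_{\mathbb D}$ by taking the $e_1$-coefficient in the idempotent representation, and then to check the four axioms of a real 2-norm. The argument for $e_2 X$ is entirely symmetric.

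First I would establish that if $x, y \in e_1 X$, then $\|x, y\|_{\mathbb D}$ already lies in the $e_1$-ideal of $\mathbb D^+$. Since elements of $e_1 X$ satisfy $e_1 x = x$, and since $|e_1|_{\textbf{k}} = e_1$ and $e_1^2 = e_1$, applying axioms (iii) and (ii) of Definition 3.1 yields
$$\|x, y\|_{\mathbb D} \;=\; \|e_1 x, e_1 y\|_{\mathbb D} \;=\; |e_1|_{\textbf{k}}^{\,2}\, \|x, y\|_{\mathbb D} \;=\; e_1\, \|x, y\|_{\mathbb D},$$
so the $e_2$-component vanishes. Hence there is a unique real number $\|x, y\|_1 \geq 0$ with $\|x, y\|_{\mathbb D} = e_1\, \|x, y\|_1$, and this is the candidate induced 2-norm on $e_1 X$.

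Next I would verify axioms (ii)--(iv) for $\|\cdot,\cdot\|_1$. Symmetry is inherited directly from the $\mathbb D$-valued symmetry. For $\alpha \in \mathbb R \subset \mathbb D$ we have $|\alpha|_{\textbf{k}} = |\alpha|$, so axiom (iii) for $\|\cdot,\cdot\|_{\mathbb D}$ gives $\|\alpha x, y\|_{\mathbb D} = |\alpha|\, \|x, y\|_{\mathbb D}$, and reading off the $e_1$-coefficient yields $\|\alpha x, y\|_1 = |\alpha|\, \|x, y\|_1$. The triangle inequality is obtained by projecting axiom (iv) onto the $e_1$-component, using that $\leq'$ on $\mathbb D^+$ is defined componentwise in the idempotent basis, so the $\mathbb D$-valued inequality descends to the ordinary inequality on real $e_1$-coefficients.

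The main obstacle is the non-degeneracy axiom (i), where one must reconcile $\mathbb D$-linear dependence (appearing in the statement of Definition 3.1(i)) with $\mathbb R$-linear dependence on the real linear space $e_1 X$. The key observation is that $e_2$ annihilates $e_1 X$, since $e_1 e_2 = 0$; equivalently, for any $\alpha = e_1 \alpha_1 + e_2 \alpha_2 \in \mathbb D$ and any $y \in e_1 X$ one has $\alpha y = \alpha_1 y$, so the $\mathbb D$-module action on $e_1 X$ factors through $\mathbb R$. Consequently, a relation $x = \alpha y$ with $\alpha \in \mathbb D$ is equivalent to $x = \alpha_1 y$ with $\alpha_1 \in \mathbb R$, so $\mathbb D$-linear dependence and $\mathbb R$-linear dependence coincide on $e_1 X$. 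This lets axiom (i) for $\|\cdot,\cdot\|_{\mathbb D}$ pass to $\|\cdot,\cdot\|_1$. Replacing $e_1$ by $e_2$ throughout produces the real 2-norm $\|\cdot,\cdot\|_2$ on $e_2 X$, completing the proof.
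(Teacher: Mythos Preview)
Your proof is correct and follows essentially the same approach as the paper: write the $\mathbb D$-valued 2-norm in idempotent components, show the $e_2$-component vanishes on $e_1X \times e_1X$, and then project each axiom onto the $e_1$-coefficient. Your handling of axiom (i)---observing that the $\mathbb D$-action on $e_1X$ factors through $\mathbb R$, so $\mathbb D$-linear and $\mathbb R$-linear dependence coincide there---is in fact more explicit than the paper's, which invokes linear dependence without distinguishing the two notions.
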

\begin{proof} Let $\|.,.\|_\mathbb D: X \times X \rightarrow \mathbb D$ be the $\mathbb D$-valued 2-norm on $X$. Then we can write it as
$$ \|x,y\|_\mathbb D= e_1\Phi(x,y) +e_2 \Psi(x,y), \; \forall\; (x,y)\in X \times X $$ where $\Phi, \Psi: X \times X \rightarrow \mathbb R$ are real-valued functions. For each $x, y \in X$, we have   
\begin{align*}  e_1\Phi(e_1x, e_1y) + e_2 \Psi(e_1x, e_1y)&= \|e_1x, e_1y\|_\mathbb D=e_1\|e_1x, e_1y\|_\mathbb D\\
&=e_1\left(e_1\Phi(e_1x, e_1y) + e_2 \Psi(e_1x, e_1y)\right)\\
&= e_1\Phi(e_1x, e_1y).
\end{align*}
This implies that $\Psi(e_1x, e_1y)=0\;\text{and}\; \|e_1x, e_1y\|_\mathbb D=e_1\Phi(e_1x, e_1y).$ Similarly, one can prove that $\Phi(e_2x, e_2y)=0\; \text{and}\; \|e_2x, e_2y\|_\mathbb D=e_2\Psi(e_1x, e_1y).$ Hence 
\begin{align}  \label{DD} \|x,y\|_\mathbb D &= e_1 \|x,y\|_\mathbb D+ e_2 \|x,y\|_\mathbb D=e_1 \|e_1x,y\|_\mathbb D+ e_2 \|e_2x,y\|_\mathbb D \notag\\
&=e_1\|y,e_1x\|_\mathbb D+ e_2 \|y,e_2x\|_\mathbb D=\|e_1y,e_1x\|_\mathbb D+ \|e_2y,e_2x\|_\mathbb D \notag\\
&= \|e_1x, e_1y\|_\mathbb D + \|e_2x, e_2y\|_\mathbb D \notag\\
&= e_1\Phi(e_1x, e_1y)+e_2 \Psi(e_2x, e_2y).
\end{align}
Since for any $x,y\in X$, $$e_1\Phi(e_1x, e_1y)=\|e_1x, e_1y\|_\mathbb D=\|e_1y, e_1x\|_\mathbb D= e_1\Phi(e_1y, e_1x).$$ Thus, $~~~~~~~~~~~~~~ \Phi(e_1x, e_1y)= \Phi(e_1y, e_1x).$\\ Similarly, $~~~~~~~~~ \Psi(e_2x, e_2y)= \Psi(e_2y, e_2x).$\\
Now for any $\lambda \in \mathbb R$ and $x,\;y\in X$, 
$$ \|\lambda x,y\|_\mathbb D= |\lambda|_\textbf{k} \|x,y\|_\mathbb D= |\lambda| \|x,y\|_\mathbb D. $$ 
This implies $$e_1\Phi(\lambda e_1x, e_1y)+e_2 \Psi(\lambda e_2x, e_2y)= |\lambda| e_1\Phi( e_1x, e_1y)+ |\lambda| e_2 \Psi(e_2x, e_2y) $$ and thus we have 
$~~~~~~~~~\Phi(\lambda e_1x, e_1y)= |\lambda| \Phi( e_1x, e_1y)$\;\; and $$\Psi(\lambda e_2x, e_2y)= |\lambda| \Psi(e_2x, e_2y).$$ For the triangular inequality, let $x, y, z \in X$ such that $$\|x+y,z\|_\mathbb D \leq' \|x,z\|_\mathbb D+\|y,z\|_\mathbb D.$$
Then by using (\ref{DD}), we have 
 $$e_1\Phi(e_1x+e_1y, e_1z)+e_2 \Psi(e_2x+e_2y, e_2z)\leq' $$ $$e_1\Phi(e_1x,e_1z)+ e_2 \Psi(e_2x, e_2z) + e_1\Phi(e_1y,e_1z)+ e_2 \Psi(e_2y, e_2z).$$
Hence  $$\Phi(e_1x+e_1y, e_1z) \leq \Phi(e_1x,e_1z) + \Phi(e_1y,e_1z)$$
and $$ \Psi(e_2x+e_2y, e_2z) \leq \Psi(e_2x, e_2z)+ \Psi(e_2y, e_2z).$$
Finally it remains to show that $\Phi(e_1x, e_1y)=0$ if and only if $e_1x$, $e_1y$ are linearly dependent and similarly for $\Psi$.
Firstly suppose that $\Phi(e_1x, e_1y)=0$. This means $e_1 \Phi(e_1x, e_1y)=0$ and hence $\|e_1x, e_1y\|_\mathbb D=0$. Since $\|.,.\|_\mathbb D$ is a 2-norm on $X$ implies that $e_1x$ and $e_1y$ are linearly dependent.\\
Conversly, suppose $x, y \in X$ such that $e_1x$ and $e_1y$ are linearly dependent. Then 
\begin{align*} \|x, y\|_\mathbb D &= e_1\|x, y\|_\mathbb D+e_2\|x, y\|_\mathbb D= \|e_1x, e_1y\|_\mathbb D + \|e_2x, e_2y\|_\mathbb D\\
&=\|e_2x, e_2y\|_\mathbb D = e_2 \Psi(e_2x, e_2y).
\end{align*}
Thus, by using (\ref{DD}), we have $\Phi(e_1x, e_1y)=0$ and similarly for $\Psi$. 
Hence $\Phi$ is a real 2-norm on the real linear space $e_1 X$ and $\Psi$ is a real 2-norm on the real linear space $e_2 X$.  
 \end{proof}
 \begin{definition} A sequence $\left\{x_n\right\}_{n\in \mathbb N}$ in a 2-normed $\mathbb D$-module $X$ is called a convergent sequence with respect to the $\mathbb D$-valued 2-norm $\|.,.\|_\mathbb D$ if there is an $x_0 \in \mathcal A$ such that $$\underset{n\to\infty}{\mathop{\lim}} \;\|x_n-x_0,y\|_\mathbb D=0\;, \; \text{for all}\; y\in X.$$
\end{definition} 										

\begin{remark} By using Proposition \ref{DD1}, we can write $\|.,.\|_\mathbb D=e_1 \|.,.\|_1+e_2 \|.,.\|_2$, where $\|.,.\|_l$ is a 2-norm on the real linear space $e_l\mathcal A$, for $l=1,2$. Then
\begin{align*} &\underset{n\to\infty}{\mathop{\lim}} \;\|x_n-x_0,y\|_\mathbb D=0\;, \; \forall\; y\in X \;\text{implies}\\
&\underset{n\to\infty}{\mathop{\lim}} \;\|e_lx_n-e_lx_0,e_ly\|_l=0, \;\forall\;\; e_l y \in e_l X, \; \text{for}\; l=1,2.
\end{align*}
This implies that the sequence $\left\{e_1x_n\right\}_{n\in \mathbb N}$ converges to $e_1x_0$ in $e_1 X$ and sequence $\left\{e_2x_n\right\}_{n\in \mathbb N}$ converges to $e_2x_0$ in $e_2 X$.
\end{remark}
\end{section}

\begin{section} 
{$\mathbb D$-Linear 2-Functional on 2-normed $\mathbb D$-modules}  
In this section we define $\mathbb D$-valued linear 2-functionals on 2-normed $\mathbb D$-modules and discuss some of their properties.
Let $X$ be a $\mathbb D$-module and let $\mathcal M$ and $\mathcal N$ be two submodules of $X$. A map $f: \mathcal M\times \mathcal N \rightarrow \mathbb D$ is called a $\mathbb D$-valued 2-functional on $\mathcal M\times \mathcal N$.  
\begin{definition} Let $f$ be a $\mathbb D$-valued 2-functional on $\mathcal M\times \mathcal N$. If $f$ is such that for each $\alpha, \beta \in \mathbb D$ and for all $x, y  \in \mathcal M$ and $z,w \in \mathcal N$ we have:
\begin{enumerate}
 \item[(i)]  $f(x+y, z+w)=f(x,z)+ f(y,z)+ f(x,w) + f(y,w)$,
 \item[(ii)] $f(\alpha x, \beta z)= \alpha \beta f(x,z)$,
\end{enumerate}
then $f$ is called a $\mathbb D$-linear 2-functional with domain $\mathcal M\times \mathcal N$. Further, it is easy to show that if $x$ and $y$ are linear dependent in $X$, then $f(x,y)=0$ for $(x,y) \in \mathcal M\times \mathcal N$.\\
Let $f: \mathcal M\times \mathcal N \rightarrow \mathbb D$  be a $\mathbb D$-linear 2-functional. For any $x, z\in \mathcal M\times \mathcal N$, one can write \begin{align} f(x,z)= \phi(x,z)+ \textbf{k} \psi(x,z)=e_1 f_1(x,z)+e_2f_2(x,z),\end{align} where $\phi(x,z) \in \mathbb R$, $\psi(x,z) \in \mathbb R$, $f_1(x,z) \in \mathbb R$ and $f_2(x,z) \in \mathbb R$ with $f_1= \phi+\psi$ and $f_2=  \phi-\psi$.\\\\ Let us first show that $f_1, f_2: \mathcal M_\mathbb R\times \mathcal N_\mathbb R\rightarrow \mathbb R$ are real linear 2-functionals, where $\mathcal M_\mathbb R$ and $\mathcal N_\mathbb R$ are real linear subspaces of $X$.\\      
Given $\alpha=\alpha_1+\textbf{k}\alpha_2$, $\beta=\beta_1+\textbf{k}\beta_2 \in \mathbb D$ with $x, y  \in \mathcal M$ and $z,w \in \mathcal N$, one has:
\begin{align*} &f(x+y, z+w)=  e_1 f_1(x+y,z+w)+e_2f_2(x+y,z+w)\;\; \text{implies}\\
&	f(x,z)+ f(y,z)+ f(x,w) + f(y,w)= e_1 f_1(x+y,z+w)+e_2f_2(x+y,z+w).
\end{align*}
Then\;\; $e_1 (f_1(x,z)+f_1(y,z)+ f_1(x,w)+ f_1(y,w))+ e_2(f_2(x,z) + f_2(y,z)$\\ $~~~~~~~~~~~~+ f_2(x,w)+f_2(y,w))=  e_1 f_1(x+y,z+w)+e_2f_2(x+y,z+w).$\\\\
Hence\;\; 
$f_1(x,z)+f_1(y,z)+ f_1(x,w)+ f_1(y,w)=f_1(x+y,z+w)$ and 
 $$ f_2(x,z) + f_2(y,z)+ f_2(x,w)+f_2(y,w)=f_2(x+y,z+w).$$
Further,  $f(\alpha x, \beta z )= e_1 f_1(\alpha x, \beta z)+e_2f_2(\alpha x, \beta z)$\;\; implies\\
 $~~~~~~~~~~~~~\alpha \beta f( x,z)=  e_1 f_1(\alpha x, \beta z)+e_2f_2(\alpha x, \beta z).$\\\\ That is,\;\;
 $e_1((\alpha_1+\alpha_2) (\beta_1+\beta_2) f_1(x,z))+ e_2((\alpha_1-\alpha_2) (\beta_1-\beta_2) f_2(x,z))$\\$~~~~~~~~~~~= e_1 f_1((\alpha_1+\textbf{k}\alpha_2) x, (\beta_1+\textbf{k}\beta_2) z)+e_2f_2( (\alpha_1+\textbf{k}\alpha_2)x, (\beta_1+\textbf{k}\beta_2) z)$.\\\\ This implies $(\alpha_1+\alpha_2) (\beta_1+\beta_2) f_1(x,z)=f_1((\alpha_1+\textbf{k}\;\alpha_2) x, (\beta_1+\textbf{k}\;\beta_2) z)$ and
$$(\alpha_1-\alpha_2) (\beta_1-\beta_2) f_2(x,z)=f_2((\alpha_1+\textbf{k}\;\alpha_2) x, (\beta_1+\textbf{k}\;\beta_2) z).$$ In particular, setting $\alpha_2=0$ and $\beta_2=0$, we have:
$$ \alpha_1 \beta_1 f_1(x,z)=f_1(\alpha_1 x, \beta_1z)\; \;\text{and}\; \;\alpha_1 \beta_1 f_2(x,z)=f_2(\alpha_1 x, \beta_1z).$$ Thus, the mappings $f_1$ and $f_2$ are real linear 2-functionals on $\mathcal M_\mathbb R\times \mathcal N_\mathbb R$. 
Similarly one can show that $\phi$ and $\psi$ are also real linear 2-functionals on $\mathcal M_\mathbb R\times \mathcal N_\mathbb R$. 
\end{definition}
\begin{remark} \label{BDD} Let $X$ be a $\mathbb D$-module and $\mathcal M$ and $\mathcal N$ be any two submodules of $X$. Then we can write $\mathcal M=e_1\mathcal M_1+e_2\mathcal M_2$ and $\mathcal N=e_1\mathcal N_1+e_2\mathcal N_2$, where $\mathcal M_l= e_l \mathcal M$ and $\mathcal N_l= e_l \mathcal N$, for $l=1,2$, can be seen as real linear subspaces of $\mathcal M$ and $\mathcal N$ respectively. Let $f$ be a $\mathbb D$-linear 2-functional on $\mathcal M\times \mathcal N$. Then for every $x=e_1x_1+e_2x_2 \in \mathcal M$ and $y=e_1y_1+e_2y_2 \in \mathcal N$, \begin{align}& f(x,y)=f(e_1x_1+e_2x_2,e_1y_1+e_2y_2)\notag\\
&=f(e_1x_1,e_1y_1)+f(e_1x_1,e_2y_2)+f(e_2x_2,e_1y_1)+f(e_2x_2,e_2y_2) \notag\\
&=e_1f(e_1x_1,e_1y_1)+e_2f(e_2x_2,e_2y_2).
\end{align} Thus, by using (4.1) and (4.2), we have \begin{align*}  & f(e_1x_1,e_1y_1)= e_1 f_1(e_1x_1,e_1y_1)+e_2f_2(e_1x_1,e_1y_1).\end{align*}
This implies  $e_1f(e_1x_1, e_1y_1)= e_1 f_1(e_1x_1,e_1y_1)\; \text{and}\; f_2(e_2x_1,e_2y_1)=0$. That is, $ f(e_1x_1, e_1y_1)=  f_1(e_1x_1,e_1y_1).$ Similarly, $f(e_2x_2, e_2y_2)=  f_2(e_2x_2, e_2y_2).$  
Hence, we can write  \begin{align} f(x,y) =e_1f_1(e_1x_1,e_1y_1)+e_2f_2(e_2x_2,e_2y_2),\end{align}

where $f_1: \mathcal M_1\times \mathcal N_1\rightarrow \mathbb R$ and $f_2: \mathcal M_2\times \mathcal N_2\rightarrow \mathbb R$ are real linear 2-functionals.
\end{remark}
\begin{remark} Let $f$ be a $\mathbb D$-linear 2-functional with domain $\mathcal M\times \mathcal N$. For each $(x,y) \in \mathcal M\times \mathcal N$, we can write 
\begin{align} f(x,y)= \phi(x,y)+ \textbf{k} \psi(x,y), \end{align}
where $\phi, \psi$ are functions from $\mathcal M\times \mathcal N$ to $\mathbb R$. Moreover, $\phi, \psi$ are real linear 2-functionals. Further, $\phi(\textbf{k}x,y)+ \textbf{k} \psi(\textbf{k}x,y)= f(\textbf{k}x, y)= \textbf{k} f(x,y)= \textbf{k} (\phi(x,y)+ \textbf{k} \psi(x,y))= \psi(x,y)+ \textbf{k} \phi(x,y)$.\\ Then\; $\phi(\textbf{k} x,y)= \psi(x,y)$ and $\psi(\textbf{k}x,y)= \phi(x,y)$. Thus, (4.4) becomes \begin{align} f(x,y)= \phi(x,y)+ \textbf{k} \phi(\textbf{k}x,y).\end{align} 
Similarly, we can also define \begin{align} f(x,y)= \phi(x,y)+ \textbf{k} \phi(x,\textbf{k}y).\end{align} 
Thus, for each $\mathbb D$-linear 2-functional $f$, there is a  real linear 2-functional $\phi$ associated to $f$ by the formula (4.5) and (4.6). However, if $\phi$ be any real linear 2-functional and if we define $f$ as in (4.5) or (4.6), then it can be seen that $f$ need not be a $\mathbb D$-linear 2-functional.    
\end{remark}
\begin{definition} \label{DSE} Let $X$ be a 2-normed $\mathbb D$-module and let $\mathcal M$ and $\mathcal N$ be two submodules of $X$. A linear 2-functional $f:\mathcal M\times \mathcal N\rightarrow \mathbb D$ is called $\mathbb D$-bounded if there exists $\Delta \in \mathbb D^+$ such that $$ |f(x,y)|_\textbf{k} \leq^{'} \Delta. \|(x,y)\|_\mathbb D, \; \forall\;(x,y) \in \mathcal M\times \mathcal N.$$
If $f$ is $\mathbb D$-bounded, then the hyperbolic norm of $f$ is given by 
\begin{align}\label{llp} \|f\|_\mathbb D= \text{inf}_\mathbb D \left\{ \Delta\;:\;|f(x,y)|_\textbf{k} \leq^{'} \Delta. \|(x,y)\|_\mathbb D\right\}, \; \forall\;(x,y) \in \mathcal M\times \mathcal N.\end{align} \end{definition}
Setting $f= e_1f_1+e_2f_2$ and $\Delta=e_1\Delta_1+e_2\Delta_2$ with $\Delta_1,\;\Delta_2 \in \mathbb R^+$
and by using Remark \ref{BDD}, one gets: 
\begin{align*} 
\|f\|_\mathbb D&=\|e_1f_1+e_2f_2\|_\mathbb D\\
&= \text{inf}_\mathbb D \left\{ \Delta\;:\;|f(x,y)|_\textbf{k} \leq^{'} \Delta. \|(x,y)\|_\mathbb D\right\}\\
&=\text{inf}_\mathbb D \left\{ e_1\Delta_1+e_2\Delta_2\right\}\;\; \text{such that}
\end{align*} 
$$|e_1f_1(e_1x_1,e_1y_1)+e_2f_2(e_2x_2,e_2y_2)|_\textbf{k}  \leq^{'} (e_1\Delta_1+e_2\Delta_2)(e_1\|x_1,y_1\|_1+e_2\|x_2,y_2\|_2).$$ 
 This implies $\|f\|_\mathbb D= e_1 \text{inf}\left\{\Delta_1\right\}+ e_2 \text{inf}\left\{\Delta_2\right\}$ such that
$$|f_1(e_1x_1,e_1y_1)| \leq \Delta_1 \; \|e_1x_1,e_1y_1\|_1 \;\;\text{and} \;\;|f_2(e_2x_2,e_2y_2)| \leq \Delta_2 \; \|e_2x_2,e_2y_2\|_2.$$
Thus, the hyperbolic norm of $f$ can also be defined as   \begin{align} \|f\|_\mathbb D=e_1\|f_1\|_1+e_2\|f_2\|_2,\end{align}
where $\|f_l\|_1= \text{inf}\left\{\Delta_l \;:\; |f_l(e_lx_l,e_ly_l)| \leq \Delta_l \; \|e_lx_l,e_ly_l\|_l\right\}$,\; $l=1,2$. 
\begin{remark} \label{nnn}                                  
In real 2-normed spaces, the norm of bounded real linear 2-functional $f_1$ on $\mathcal M_1 \times \mathcal N_1$ can be defined as 
\begin{align*} \|f_1\|_1&=\text{sup} \left\{\frac{|f_1(x_1,y_1)|}{\|x_1,y_1\|_1}\;:\;\|x_1,y_1\|_1 \neq 0,\; (x_1,y_1)\in \mathcal M_1 \times \mathcal N_1\right\}\\
&=\text{sup} \left\{|f_1(x_1,y_1)|\;:\;\|x_1,y_1\|_1=1, \;(x_1,y_1)\in \mathcal M_1\times \mathcal N_1\right\}  
\end{align*} and similarly for $f_2$. 
Thus, by using these equalities, formula (4.8) gives
\begin{align*} \|f\|_\mathbb D&= e_1\text{sup} \left\{\frac{|f_1(x_1,y_1)|}{\|x_1,y_1\|_1}\;:\;\|x_1,y_1\|_1 \neq 0,\; (x_1,y_1)\in \mathcal M_1 \times \mathcal N_1\right\}\\
&+ e_2\text{sup} \left\{\frac{|f_2(x_2,y_2)|}{\|x_2,y_2\|_2}\;:\;\|x_2,y_2\|_2 \neq 0,\: (x_2,y_2)\in \mathcal M_2 \times \mathcal N_2\right\}\\
&= \text{sup}_\mathbb D \left\{\frac{e_1|f_1(x_1,y_1)|+e_2 |f_2(x_2,y_2)|}{e_1\|x_1,y_1\|_1+e_2 \|x_2,y_2\|_2}\right\}\;\text{such\; that}\\
&  ~~~~~~~~~~~~\;(e_1\|x_1,y_1\|_1+e_2 \|x_2,y_2\|_2 \notin \mathcal {NC}_\mathbb D \cup \left\{0\right\})\\
&=  \text{sup}_\mathbb D \left\{\frac{|e_1f_1(x_1,y_1)+e_2 f_2(x_2,y_2)|_\textbf{k}}{\|e_1(x_1,y_1)+e_2 (x_2,y_2)\|_\mathbb D}\right\}\;\text{such\; that}\\
& ~~~~~~~~~~~~~(\|e_1(x_1,y_1)+e_2 (x_2,y_2)\|_\mathbb D \notin \mathcal {NC}_\mathbb D \cup \left\{0\right\})\\
&=\text{sup}_\mathbb D \left\{\frac{|f(x,y)|_\textbf{k}}{\|x,y\|_\mathbb D}\;:\;\|x,y\|_\mathbb D \notin \mathcal {NC}_\mathbb D \cup \left\{0\right\}, \text{where}\;(x,y)\in \mathcal M\times \mathcal N\right\}.
\end{align*}
Similarly, one can also prove that \begin{align*} \|f\|_\mathbb D&= \text{sup}_\mathbb D \left\{|f(x,y)|_\textbf{k}\;;\;\|x,y\|_\mathbb D=1,\;\;(x,y)\in \mathcal M\times \mathcal N\right\}.
\end{align*}
\end{remark}
 \end{section}
\begin{section} 
{The Hahn-Banach theorem for $\mathbb D$-linear\\ 2-functionals}
In this section we prove the  Hahn-Banach theorem for 2-normed $\mathbb D$-modules. Further, we discuss one of its important corollary.
  
\begin{theorem} \label{2 hahn} Let $X$ be a 2-normed $\mathbb D$-module and $z \in X$. Let $[z]$ denotes the $\mathbb D$-submodule of $X$ generated by $z$ and $\mathcal M$ be any $\mathbb D$-submodule of $X$. Let $f$ be a $\mathbb D$-bounded linear 2-functional on $\mathcal M \times [z]$ $(or \;on \;[z] \times \mathcal M)$. Then there exists a   $\mathbb D$-bounded linear 2-functional $F$ on $X \times [z]$ $\left(or \;on \;[z] \times X \right) $ such that 
$$ \|F\|_\mathbb D = \|f\|_\mathbb D\; \text{and}\;\;$$
$$ F(x,\alpha z)= f(x,\alpha z), \;\;\text{for\;all}\;\;\; (x,\alpha z)\in \mathcal M \times [z]$$
 $$(\text{or}\;\;F(\alpha z,x)= f(\alpha z,x), \;\;\text{for\;all}\;\;\; (\alpha z,x)\in [z] \times \mathcal M).$$
\end{theorem}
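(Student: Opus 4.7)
The approach is to reduce the problem to the classical real-valued case via the idempotent decomposition of $X$, applying A.~White's Hahn--Banach theorem \cite{AW} for real linear 2-functionals on each idempotent component and then reassembling the pieces.

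First, using the idempotent decomposition, I would write $X = e_1 X_1 + e_2 X_2$, $\mathcal{M} = e_1\mathcal{M}_1 + e_2\mathcal{M}_2$, and $z = e_1 z_1 + e_2 z_2$, so that $[z]$ decomposes as $e_1[z]_1 + e_2[z]_2$, where $[z]_l := e_l[z]$ is the real one-dimensional subspace of $X_l$ spanned by $e_l z_l$. By Proposition \ref{DD1}, the $\mathbb{D}$-valued 2-norm restricts to real 2-norms $\|\cdot,\cdot\|_1$ and $\|\cdot,\cdot\|_2$ on $X_1$ and $X_2$. By Remark \ref{BDD}, the given 2-functional takes the form
$$f(x,\alpha z) = e_1 f_1(e_1x_1, e_1\alpha z) + e_2 f_2(e_2x_2, e_2\alpha z),$$
where $f_l : \mathcal{M}_l \times [z]_l \to \mathbb{R}$ are real linear 2-functionals; moreover, $\mathbb{D}$-boundedness of $f$ is equivalent to boundedness of each $f_l$ and $\|f\|_\mathbb{D} = e_1\|f_1\|_1 + e_2\|f_2\|_2$.

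Next, I would invoke White's classical Hahn--Banach theorem for real linear 2-functionals to extend each $f_l$ to a bounded real linear 2-functional $F_l : X_l \times [z]_l \to \mathbb{R}$ satisfying $F_l|_{\mathcal{M}_l \times [z]_l} = f_l$ and $\|F_l\|_l = \|f_l\|_l$. I would then define the candidate extension
$$F(x,\alpha z) := e_1 F_1(e_1 x_1, e_1 \alpha z) + e_2 F_2(e_2 x_2, e_2 \alpha z),$$
for $x = e_1 x_1 + e_2 x_2 \in X$ and $\alpha z \in [z]$.

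Finally, I would verify the three required conditions. Bi-additivity is inherited directly from that of $F_1$ and $F_2$. The $\mathbb{D}$-homogeneity identity $F(\alpha x, \beta z') = \alpha\beta\, F(x, z')$ reduces, via the component-wise multiplication law $\alpha\beta = e_1\alpha_1\beta_1 + e_2\alpha_2\beta_2$, to the real bilinearity of each $F_l$, so $F$ is a genuine $\mathbb{D}$-linear 2-functional. That $F$ extends $f$ on $\mathcal{M} \times [z]$ is immediate from $F_l|_{\mathcal{M}_l \times [z]_l} = f_l$ combined with the decomposition formula. Norm preservation follows from the component norm formula:
$$\|F\|_\mathbb{D} = e_1\|F_1\|_1 + e_2\|F_2\|_2 = e_1\|f_1\|_1 + e_2\|f_2\|_2 = \|f\|_\mathbb{D}.$$
The symmetric case with $[z] \times \mathcal{M}$ in place of $\mathcal{M} \times [z]$ is handled identically, appealing to the symmetry axiom of the $\mathbb{D}$-valued 2-norm. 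The only delicate point requiring care — and the main bookkeeping obstacle — is to check that the idempotent decomposition behaves consistently with $\mathbb{D}$-scalar multiplication in \emph{both} slots simultaneously, but once the component-wise multiplication identity is in hand, no genuine analytic difficulty remains: all the nontrivial extension content is packaged in the real-valued Hahn--Banach theorem.
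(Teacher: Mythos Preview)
Your proof is correct and takes a genuinely different route from the paper. The paper works intrinsically in the $\mathbb D$-module setting, mimicking the classical one-step extension argument: pick $x'\in X\setminus\mathcal M$, choose a value for the extension at $x'$ via a $\sup_{\mathbb D}/\inf_{\mathbb D}$ sandwich for $f(\cdot,z)$, verify the norm bound, and then apply Zorn's lemma. This forces a separate case analysis when $z\in\mathcal{NC}_X$, since the sandwich argument implicitly requires $\|{\cdot},z\|_{\mathbb D}$ to be non-degenerate; the paper handles this by first perturbing $z$ to $z'=e_1z+e_2u\notin\mathcal{NC}_X$, extending there, and then restricting. Your approach instead pushes the entire problem through the idempotent decomposition and invokes White's real Hahn--Banach theorem on each factor $X_l$. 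This buys uniformity---the zero-divisor case simply becomes the degenerate component $[z]_l=\{0\}$, where $f_l\equiv 0$ and the extension is trivial---and concentrates all analytic content in the already-established real result, at the price of taking White's theorem as a black box rather than exhibiting the extension mechanism directly in the $\mathbb D$-valued context. One small imprecision: you describe $[z]_l$ as ``one-dimensional,'' which fails exactly when $e_lz=0$; this does not affect the argument, but it is worth stating explicitly that in that case $F_l\equiv 0$ works.
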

\begin{proof} Let $f$ be defined on $\mathcal M \times [z]$ be a $\mathbb D$-bounded linear 2-functional. Note that if we take $[z] \times \mathcal M$ as domain of $f$, the proof will follows on the similar lines.\\ 
\textbf{Case 1}: Suppose $z \notin \left\{\mathcal {NC}_X\cup \left\{0\right\}\right\}$.
If $\mathcal M \neq X $, then there exists $ x' \in X - \mathcal M$. Define 
$$ \mathcal N= \left\{ x+ \beta x'\;|\; x\in \mathcal M, \;\beta \in \mathbb D \right\}= \mathcal M + \left\{ \beta x' \;|\;\beta \in \mathbb D \right\}.$$   
Clearly $\mathcal N$ is a $\mathbb D$-module. Let $x, y \in \mathcal M$. Then 
\begin{align*} f( x, z)-f(y, z)&= f(x-y,z) \leq \|f\|_\mathbb D \|x-y,z\|_\mathbb D\\
&= \|f\|_\mathbb D \|(x+x')-(y+x'),z\|_\mathbb D\\
&\leq'  \|f\|_\mathbb D \|x+x',z\|_\mathbb D +  \|f\|_\mathbb D \|y+x',z\|_\mathbb D.
\end{align*}
Thus, \begin{align} -\|f\|_\mathbb D \|y+x',z\|_\mathbb D-f(y, z) \leq' \|f\|_\mathbb D \|x+x',z\|_\mathbb D-f( x, z). \end{align}
Therefore, 
\begin{align*} m_0&= \text{sup}_{y \in \mathcal M} \left\{-\|f\|_\mathbb D \|y+x',z\|_\mathbb D-f(y, z)\right\}\\
&\leq' \text{inf}_{x \in \mathcal M} \left\{\|f\|_\mathbb D \|x+x',z\|_\mathbb D-f( x, z)\right\}= m.
\end{align*}
Choose $r \in \mathbb D$ such that $ m_0 \leq' r \leq' m$. Setting y=x in (5.1), we get
$$ -\|f\|_\mathbb D \|x+x',z\|_\mathbb D-f(x, z) \leq'r \leq' \|f\|_\mathbb D \|x+x',z\|_\mathbb D-f( x, z).$$  
 \begin{align} \text{That\; is},\;~~~| f(x,z)+r|_\textbf{k} \leq' \|f\|_\mathbb D \|x+x',z\|_\mathbb D. \end{align}
For any $(x+\beta x', \alpha z) \in \mathcal N \times [z]$, define $g$ on $\mathcal N \times [z]$ as
  $$g(x+\beta x', \alpha z)= \alpha f(x,z) + \alpha \beta r.$$ 
Clearly, $g$ is a $\mathbb D$-extension of $f$. Further, it is easy to see that $g$ is $\mathbb D$-linear 2-functional. To show that $g$ is $\mathbb D$-bounded, replace $x$ by $x / \beta$ in (5.2), where $ \beta \notin \mathcal {NC}_\mathbb D$ is a non zero hyperbolic number. Then we get 
\begin{align*} | f(x,z)+ \beta r|_\textbf{k} \leq' \|f\|_\mathbb D \|x+ \beta x',z\|_\mathbb D. \end{align*} 
Thus, \begin{align*} |g(x+\beta x', \alpha z)|_\textbf{k} &= | \alpha f(x,z) + \alpha \beta r|_\textbf{k} = |\alpha|_\textbf{k} |f(x,z) + \beta r|_\textbf{k}\\
&\leq' |\alpha|_\textbf{k} \|f\|_\mathbb D \|x+ \beta x',z\|_\mathbb D\\
&= \|f\|_\mathbb D \|x+ \beta x', \alpha z\|_\mathbb D.
\end{align*}
 Hence $g$ is $\mathbb D$-bounded and $\|g\|_\mathbb D \leq' \|f\|_\mathbb D.$  Thus $\|g\|_\mathbb D = \|f\|_\mathbb D.$\\\\
Now consider the family  $\mathcal P$ of all pairs $(\mathcal M', f'),$ where $\mathcal M'$ is a $\mathbb D$-submodule of $X$ such that $\mathcal M \subseteq \mathcal M'$ and $f'$ is $\mathbb D$-bounded linear 2-functional on $ \mathcal M' \times [z]$ with $f' |_\mathcal M = f$ and $\|f'\|_\mathbb D = \|f\|_\mathbb D.$ A partial order $\prec$ on $\mathcal P$ is introduced as follows:  $(\mathcal M', f') \prec (\mathcal M'', f'')$ if and only if $\mathcal M' \subseteq \mathcal M''$ and $f''$ is an extension of $f'$ with  $\|f''\|_\mathbb D = \|f'\|_\mathbb D.$ Let $\mathcal T$ be any linearly ordered subset of $\mathcal P$. Let  $\widetilde{\mathcal M}=\bigcup \mathcal M'$ such that $(\mathcal M', f') \in \mathcal T$. Clearly $\widetilde{\mathcal M}$ is a $\mathbb D$-linear submodule of $X$ containing $\mathcal M$. For each $x\in \widetilde{\mathcal M}$ there exists $\mathcal M'\in \mathcal T$ with $x\in \mathcal M'$. Define  $\widetilde{f}: \widetilde{\mathcal M} \times [z] \rightarrow \mathbb D$ as $ \widetilde{f}(x, \alpha z)= h(x, \alpha z),$ where $h$ is associated with some $\mathcal N$ such that $(\mathcal N, h) \in \mathcal T$, which contains $x$. Then $\widetilde{f}$ is well defined as $\mathcal T$ is linearly ordered set. Thus the constructed pair $(\widetilde{\mathcal M}, \widetilde{f})$ is hence an upper bound for the linearly ordered set $\mathcal T$. Hence by Zorn's Lemma, $\mathcal P$ contains a maximal element $(\mathcal A, F)$. Further, $\mathcal A=X$, for if not, then there exists $(\widehat{\mathcal M}, \widehat{f}) \in \mathcal P$ such that $(\mathcal A, F) \prec (\widehat{\mathcal M}, \widehat{f})$, which contradicts the maximality of $(\mathcal A, F)$. Thus, for Case 1, the theorem is established.\\

\textbf{Case 2}: Now let $z \in \mathcal {NC}_X$ be a non zero element. Then either $z=e_1z$ or $z=e_2z$. Suppose $z=e_1z$. We can choose some $z' \in X$ such that $z'=e_1z+e_2u$, where $e_2u$ lies in $\mathcal {NC}_X$. Since $f$ is a $\mathbb D$-bounded linear 2-functional on $\mathcal M \times [z]$, we define a $\mathbb D$-bounded linear 2-functional $f'$ on $\mathcal M \times [z']$ in such a way that $f'(x,\alpha z)=f(x,\alpha z)$. Then by using Case 1, we get a $\mathbb D$-bounded linear 2-functional $F$ on $X \times [z']$ such that $ \|F\|_\mathbb D = \|f'\|_\mathbb D\; \text{and}\;\;$ $ F(x,\alpha z')= f'(x,\alpha z'), \;\;\text{for\;all}\;\;\; (x,\alpha z')\in \mathcal M \times [z']$. Clearly, $F$ is also a $\mathbb D$-bounded linear 2-functional on $X \times [z]$ such that 
$F(x,\alpha z)= f(x,\alpha z)$, whenever $(x,\alpha z)\in \mathcal M \times [z]$.\\
Further, if $z=0$, then $f(x,\alpha z)=0$, for every $(x, \alpha z) \in \mathcal M \times [z]$. Define $F$ on $X \times [z]$ as $F(x,\alpha z)=0$, for every $(x, \alpha z) \in X \times [z]$ and the theorem follows.                                                                               
\end{proof}                                                                                                               
\begin{corollary} Let  $(X, \|.,.\|_\mathbb D)$ be a 2-normed $\mathbb D$-module and let $x_0$ and $y_0$ be linearly independnt elements of $X$ such that $x_0$ and $y_0$ are not zero divisors in $X$. Then there exists a $\mathbb D$-bounded linear 2-functional $f$ on $X \times [y_0]$ such that $\|f\|_\mathbb D=1$ and $f(x_0,y_0)=\|x_0,y_0\|_\mathbb D$.
\end{corollary}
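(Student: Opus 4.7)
The plan is to reduce to Theorem \ref{2 hahn} by first constructing a suitable $\mathbb{D}$-linear 2-functional on the small submodule $[x_0]\times [y_0]$ that already achieves the required value at $(x_0,y_0)$ with norm one, and then extending. Concretely, I would define
$$f_0:[x_0]\times[y_0]\to \mathbb{D},\qquad f_0(\alpha x_0,\beta y_0)= \alpha\beta\,\|x_0,y_0\|_{\mathbb{D}}.$$
Well-definedness requires that $\alpha x_0=\alpha' x_0$ implies $\alpha=\alpha'$ (and similarly in the second argument); this is where the hypothesis $x_0,y_0\notin \mathcal{NC}_X\cup\{0\}$ enters, since writing $\alpha-\alpha'=e_1\gamma_1+e_2\gamma_2$ and using that both idempotent components of $x_0$ are nonzero forces $\gamma_1=\gamma_2=0$. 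The $\mathbb{D}$-linearity of $f_0$ is then immediate from the formula.

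Next I would verify $\|f_0\|_{\mathbb{D}}=1$. One direction is straightforward from
$$|f_0(\alpha x_0,\beta y_0)|_{\textbf{k}}=|\alpha|_{\textbf{k}}|\beta|_{\textbf{k}}\,\|x_0,y_0\|_{\mathbb{D}}=\|\alpha x_0,\beta y_0\|_{\mathbb{D}},$$
yielding $\|f_0\|_{\mathbb{D}}\leq' 1$. For the reverse, the key point is that $\|x_0,y_0\|_{\mathbb{D}}$ is invertible in $\mathbb{D}$: by Proposition \ref{DD1},
$$\|x_0,y_0\|_{\mathbb{D}}=e_1\Phi(e_1x_0,e_1y_0)+e_2\Psi(e_2x_0,e_2y_0),$$
and if, say, $\Phi(e_1x_0,e_1y_0)=0$, then $e_1x_0,e_1y_0$ are $\mathbb{R}$-linearly dependent, giving $(\lambda,\mu)\neq (0,0)$ with $\lambda e_1x_0+\mu e_1y_0=0$; hence $(e_1\lambda)x_0+(e_1\mu)y_0=0$ with $(e_1\lambda,e_1\mu)\neq (0,0)$, contradicting the $\mathbb{D}$-linear independence of $x_0,y_0$. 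The same argument applies to $\Psi$, so both components of $\|x_0,y_0\|_{\mathbb{D}}$ are strictly positive and $\|x_0,y_0\|_{\mathbb{D}}$ is invertible. Since $|f_0(x_0,y_0)|_{\textbf{k}}=\|x_0,y_0\|_{\mathbb{D}}$, any admissible $\Delta$ in the definition of $\|f_0\|_{\mathbb{D}}$ must satisfy $\Delta\geq' 1$, whence $\|f_0\|_{\mathbb{D}}=1$.

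Finally, I would apply Theorem \ref{2 hahn} with $\mathcal{M}=[x_0]$ and $z=y_0$ to obtain a $\mathbb{D}$-bounded linear 2-functional $f$ on $X\times[y_0]$ extending $f_0$ with $\|f\|_{\mathbb{D}}=\|f_0\|_{\mathbb{D}}=1$; then $f(x_0,y_0)=f_0(x_0,y_0)=\|x_0,y_0\|_{\mathbb{D}}$, and the corollary follows. The main obstacle is precisely the invertibility of $\|x_0,y_0\|_{\mathbb{D}}$: were it merely a nonzero zero divisor such as $e_1 r$, then the infimum defining $\|f_0\|_{\mathbb{D}}$ would collapse to the idempotent $e_1$ rather than $1$, and the claimed equality $\|f\|_{\mathbb{D}}=1$ would fail. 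Both hypotheses—linear independence of $x_0,y_0$ over $\mathbb{D}$ and the no-zero-divisor condition on each—are essential in tandem to rule out this degeneracy.
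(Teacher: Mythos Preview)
Your proposal is correct and follows essentially the same route as the paper: define $f_0(\alpha x_0,\beta y_0)=\alpha\beta\,\|x_0,y_0\|_{\mathbb D}$ on $[x_0]\times[y_0]$, verify $\|f_0\|_{\mathbb D}=1$, and extend via Theorem~\ref{2 hahn}. If anything, your treatment is more complete than the paper's, since you justify well-definedness explicitly and supply the invertibility argument for $\|x_0,y_0\|_{\mathbb D}$ that secures the lower bound $\|f_0\|_{\mathbb D}\geq'1$, whereas the paper handles boundedness by an (unnecessary) case analysis on the idempotent components of $\alpha,\beta$ and then simply asserts $\|f_0\|_{\mathbb D}=1$ by appeal to Definition~\ref{DSE}.
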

\begin{proof} Cosider the $\mathbb D$-submodule $[x_0]\times [y_0]$ of $X \times [y_0]$ and define $f_0$ on $[x_0]\times [y_0]$ with $\alpha, \beta \in \mathbb D$  as 
$$ f_0(\alpha x_0, \beta y_0)= \alpha \beta \|x_0,y_0\|_\mathbb D.$$ Clearly $f_0$ is a $\mathbb D$ linear 2-functional with the property that $f(x_0,y_0)=\|x_0,y_0\|_\mathbb D$. 
Further, for any $\alpha, \beta \in \mathbb D$, write $\alpha=e_1\alpha_1+e_2\alpha_2$ and $\beta=e_1 \beta_1+e_2 \beta_2$, where each $\alpha_1, \;\alpha_2, \;\beta_1$ and $\beta_2$ can be zero or non zero real enteries. Now the following cases arises:\\
(i) If $\alpha_2=0$ and $\beta_1=0$, then $|f_0(\alpha x_0, \beta y_0)|_\textbf{k}$= $|f_0(e_1\alpha_1 x_0, e_2 \beta_2 y_0)|_\textbf{k} = 0$.\\ 
(ii) If $\alpha_2=0$ and $\beta_2=0$, then $|f_0(\alpha x_0, \beta y_0)|_\textbf{k}$=$|f_0(e_1\alpha_1 x_0, e_1 \beta_1 y_0)|_\textbf{k}$ \\
$ ~~~~~=e_1 \alpha_1 \beta_1 \|x_0,y_0\|_\mathbb D$= $\|e_1\alpha_1 x_0, e_1 \beta_1 y_0\|_\mathbb D$=$\|\alpha x_0, \beta y_0\|_\mathbb D$.\\
(iii) If $\alpha_1 \neq 0$, $\alpha_2 \neq 0$, $\beta_1 \neq 0$ and $\beta_2 \neq 0$, then $|f_0(\alpha x_0, \beta y_0)|_\textbf{k}=\alpha \beta \|x_0,y_0\|_\mathbb D$ $~~~~~~~~~~~~~=\|\alpha x_0, \beta y_0\|_\mathbb D$.\\
 Similarly we have the other cases. Thus, in all the cases we have $$|f_0(\alpha x_0, \beta y_0)|_\textbf{k} \leq ' \|\alpha x_0, \beta y_0\|_\mathbb D.$$ Hence $f_0$ is a $\mathbb D$-bounded linear 2-functional. Further, (\ref{llp}) implies  $\|f_0\|_\mathbb D=1$. Thus, by using Theorem \ref{2 hahn}, $f_0$ has a 2-linear extension $f$ from $[x_0]\times [y_0]$ to $X \times [y_0]$ such that $f(x_0,y_0)= f_0(x_0,y_0)=\|x_0,y_0\|_\mathbb D$ and have the same norm as $f_0$, that is $\|f\|_\mathbb D=1$.

\end{proof}

\end{section}

\bibliographystyle{amsplain}

\noindent Kulbir Singh,\; \textit{Department of Mathematics,\; University of Jammu, \;Jammu,  J\&K - 180 006, India.}\\
E-mail :\textit{ singhkulbir03@yahoo.com}\\

\noindent Romesh Kumar, \textit{Department of Mathematics, University of Jammu, Jammu, J\&K - 180 006, India.}\\
E-mail :\textit{ romesh\_jammu@yahoo.com}\\

\end{document}